\documentclass{article}
\usepackage[T1]{fontenc}
\usepackage[utf8]{inputenc}
\usepackage{microtype}
\usepackage{fullpage}
\usepackage[parfill]{parskip}
\usepackage{mathtools,amssymb,amsthm}
\usepackage[dvipsnames]{xcolor}
\usepackage{comment}
\usepackage{enumitem}
\setlist[enumerate,1]{label={\textup{(\roman*)}}}
\usepackage[hypertexnames=false]{hyperref}
\hypersetup{
    colorlinks,
    linkcolor={red!60!black},
    citecolor={green!60!black},
    urlcolor={blue!60!black}
}
\DeclareMathOperator{\rank}{rank}
\DeclareMathOperator{\wt}{wt}

\theoremstyle{plain}
\newtheorem{theorem}{Theorem}[section]
\newtheorem{proposition}[theorem]{Proposition}
\newtheorem{lemma}[theorem]{Lemma}

\newtheorem{conjecture}[theorem]{Conjecture}
\theoremstyle{definition}

\newtheorem{observation}[theorem]{Observation}

\numberwithin{equation}{section}

\title{Oddtown and Eventown Theorems for Lattice Paths}

\author{
    Umesh Shankar\\
    Department of Computer Science and Automation,\\
    Indian Institute of Science Bengaluru,\\
    Bengaluru 560012, Karnataka, India\\
    Email: \texttt{204093001@iitb.ac.in}, \texttt{umeshshankar@outlook.com}
}

\date{June 13, 2025}

\begin{document}
\maketitle

\begin{abstract}
For North-East lattice paths (which we simply call lattice paths), we define intersection in terms of common edges. We prove that a family of paths from $(0,0)$ to $(n,n)$ in which every two distinct paths have an even number of common edges has size at most $2^n$, and that this bound is attained. If $M_{\mathrm{odd}}(n)$ denotes the maximum size of a family in which every two distinct paths have an odd number of common edges, then we prove
\[
    M_{\mathrm{odd}}(n)\le n(n-1)+1
\]
and construct families showing that $M_{\mathrm{odd}}(n)=\Theta(n^2)$. Finally, we construct at least $C_n$ distinct extremal even-intersecting families, where $C_n$ is the $n$th Catalan number, and conjecture that these are all the extremal families.
\end{abstract}

\section{Introduction}

Let $n$ be a positive integer and let $[n]=\{1,2,\dots,n\}$. The classical Oddtown theorem \cite{BabaiFrankl1992} states the following.

\begin{theorem}[Oddtown]
Let $\mathcal F\subseteq 2^{[n]}$ be a family such that $|F|$ is odd for every $F\in\mathcal F$ and $|F\cap G|$ is even for every two distinct $F,G\in\mathcal F$. Then $|\mathcal F|\le n$.
\end{theorem}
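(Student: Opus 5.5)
The plan is the standard linear-algebra argument over $\mathbb{F}_2$. Write $\mathcal F=\{F_1,\dots,F_m\}$ and let $v_i\in\mathbb{F}_2^n$ be the characteristic vector of $F_i$, so that $(v_i)_j=1$ exactly when $j\in F_i$. With the standard bilinear form $\langle x,y\rangle=\sum_{j=1}^n x_jy_j$ over $\mathbb{F}_2$ we have
\[
\langle v_i,v_j\rangle \equiv |F_i\cap F_j| \pmod 2 .
\]
The hypotheses then translate into two facts: $\langle v_i,v_i\rangle=1$ for every $i$, since each $|F_i|$ is odd, and $\langle v_i,v_j\rangle=0$ for all $i\neq j$, since distinct sets meet in an even number of elements.

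The key step is to show that $v_1,\dots,v_m$ are linearly independent over $\mathbb{F}_2$. Suppose $\sum_{i}\lambda_i v_i=0$ with $\lambda_i\in\mathbb{F}_2$. Taking the inner product of both sides with $v_k$ gives
\[
0=\sum_i \lambda_i\langle v_i,v_k\rangle=\lambda_k ,
\]
so every $\lambda_k=0$. An equivalent matrix formulation: if $A$ is the $m\times n$ matrix with rows $v_i$, then $AA^{T}=I_m$ over $\mathbb{F}_2$. This forces $m=\rank(AA^T)\le \rank A\le n$.

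Since $\mathbb{F}_2^n$ has dimension $n$, it follows that $m\le n$, i.e.\ $|\mathcal F|\le n$. There is no real obstacle here. The only point needing care is that $\langle\cdot,\cdot\rangle$ is degenerate on some vectors over $\mathbb{F}_2$ (for instance $\langle x,x\rangle=0$ for every even-weight $x$), so the argument must use the odd-size hypothesis exactly as above to get the diagonal entries equal to $1$.
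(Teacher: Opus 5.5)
Your proof is correct and complete. It is the standard Babai--Frankl linear-algebra argument, which the paper only cites and does not reprove. It is also the same Gram-matrix rank idea the paper uses for its lattice-path upper bound in Section~\ref{sec:odd}. There every path has $2n$ edges, so the diagonal entries vanish, the Gram matrix becomes $J_m+I_m$, and the ambient dimension bound is replaced by $\rank(G_W)=n(n-1)$.
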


The classical Eventown theorem \cite{berle} is its even analogue.

\begin{theorem}[Eventown]
Let $\mathcal F\subseteq 2^{[n]}$ be a family such that $|F|$ is even for every $F\in\mathcal F$ and $|F\cap G|$ is even for every $F,G\in\mathcal F$. Then
\[
    |\mathcal F|\le 2^{\lfloor n/2\rfloor}.
\]
\end{theorem}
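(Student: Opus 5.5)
The plan is to translate the problem into linear algebra over $\mathbb F_2$. To each $F\in\mathcal F$ assign its characteristic vector $v_F\in\mathbb F_2^n$. Over $\mathbb F_2$, the standard dot product satisfies $v_F\cdot v_G = |F\cap G| \bmod 2$. The hypothesis that every $|F\cap G|$ is even, including the case $F=G$ which gives $|F|$ even, therefore says that $v_F\cdot v_G=0$ for all $F,G\in\mathcal F$.

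Next, let $W=\operatorname{span}\{v_F : F\in\mathcal F\}\subseteq\mathbb F_2^n$. By bilinearity, the pairwise orthogonality of the generators gives $u\cdot w=0$ for all $u,w\in W$. In other words, $W\subseteq W^\perp$, so $W$ is a totally isotropic subspace.

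The key step is a dimension bound. Since the bilinear form $(x,y)\mapsto x\cdot y$ on $\mathbb F_2^n$ is nondegenerate, we have $\dim W^\perp = n-\dim W$; this follows because $W^\perp$ is the kernel of the linear map $x\mapsto (x\cdot w_i)_i$ for a basis $(w_i)$ of $W$, and that map has full rank $\dim W$ since the standard form is nondegenerate. Combining this with $W\subseteq W^\perp$ gives $\dim W\le n-\dim W$, hence $\dim W\le\lfloor n/2\rfloor$.

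Finally, the map $F\mapsto v_F$ is injective, and every $v_F$ lies in $W$. Therefore
\[
|\mathcal F|\le |W| = 2^{\dim W}\le 2^{\lfloor n/2\rfloor}.
\]
The only point needing care is the dimension formula for $W^\perp$ over a field of characteristic $2$, where vectors can be self-orthogonal. The rank argument above does not use positivity, so it still applies. For sharpness one can pair up the elements $\{1,2\},\{3,4\},\dots$ and take all unions of pairs; this gives $2^{\lfloor n/2\rfloor}$ sets, although sharpness is not required for the statement.
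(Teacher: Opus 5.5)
Your proof is correct and complete. It is the standard argument: the characteristic vectors span a totally isotropic subspace $W\subseteq W^{\perp}$ of $\mathbb F_2^n$, and nondegeneracy of the dot product gives $\dim W+\dim W^{\perp}=n$, hence $\dim W\le\lfloor n/2\rfloor$.

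The paper does not prove this statement at all---it quotes the classical Eventown theorem with a citation---so there is no in-paper argument to compare against. The nearest relative is the $\mathbb F_2$ Gram-matrix rank argument used for the lattice-path Oddtown bound in Section~\ref{sec:odd}.

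It is worth seeing why your route does not carry over to the paper's lattice-path Eventown theorem (Theorem~\ref{thm:eventown}). The step $|\mathcal F|\le 2^{\dim\operatorname{span}\mathcal F}$ is no longer efficient there, because the span of an extremal family can have dimension exceeding $n$. For $n=1$, the two edge-disjoint paths $EN$ and $NE$ are linearly independent. Moreover, the form on the span of all paths has rank $n(n-1)$ on a space of dimension $n^2+1$. So the best dimension bound for a totally isotropic subspace there is only $n+1+\binom{n}{2}$. That is why the paper instead splits each path into two halves and argues by the induction of Lemma~\ref{hcl}.
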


A \emph{North-East lattice path}, or simply a \emph{lattice path}, is a finite sequence of vertices
\[
    F=(v_0,v_1,\dots,v_k)\in(\mathbb Z^2)^{k+1}
\]
such that
\[
    v_i-v_{i-1}\in\{(0,1),(1,0)\}
    \qquad (1\le i\le k).
\]
Equivalently, a lattice path is a word in the alphabet $\{N,E\}$, where $N=(0,1)$ and $E=(1,0)$. Let $L(m)$ denote the set of all $m$-step lattice paths starting at $(0,0)$.

For $F=(v_0,\dots,v_k)$, define its edge set by
\[
    E(F)=\bigl\{\{v_i,v_{i+1}\}:0\le i<k\bigr\}.
\]
For two paths $F$ and $G$, both starting at $(0,0)$, define
\[
    F\cap G:=E(F)\cap E(G).
\]
Thus $|F\cap G|$ is the number of common edges of the two paths. \footnote{This notion and the study of intersection theorems was suggested by Hiranya Dey through personal communication.}

The following elementary proposition illustrates this notion of intersection.

\begin{proposition}\label{prop:nonempty}
Let $n\ge1$, and let $\mathcal F$ be a family of lattice paths from $(0,0)$ to $(n,n)$ such that $F\cap G\ne\varnothing$ for every two distinct $F,G\in\mathcal F$. Then
\[
    |\mathcal F|\le \binom{2n-1}{n-1},
\]
and the bound is attained.
\end{proposition}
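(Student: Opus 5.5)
The plan is a pairing argument for the upper bound and an explicit family for the lower bound. Let $\sigma$ denote reflection in the line $y=x$, so $\sigma(a,b)=(b,a)$. For a lattice path $F$ from $(0,0)$ to $(n,n)$, the image $F':=\sigma(F)$ is again such a path: its word is obtained from that of $F$ by swapping every $N$ with $E$. The map $F\mapsto F'$ is an involution on the $\binom{2n}{n}$ paths. It has no fixed points, because the first step of $F$ differs from the first step of $F'$. Hence it splits the set of all paths into $\tfrac12\binom{2n}{n}=\binom{2n-1}{n-1}$ pairs $\{F,F'\}$.

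The key step is to show that $F\cap F'=\varnothing$ for every $F$. Then an intersecting family contains at most one path from each pair, which gives $|\mathcal F|\le\binom{2n-1}{n-1}$. Suppose $e\in E(F)\cap E(F')$. Then $e$ and $\sigma(e)$ both lie in $E(F)$. Since $\sigma$ maps horizontal edges to vertical ones, we may take $e=\{(a,b),(a+1,b)\}$ horizontal and $\sigma(e)=\{(b,a),(b,a+1)\}$ vertical, both traversed by $F$. The vertices of a North-East path are totally ordered by the componentwise order, and distinct edges are traversed one after the other. So one of the two edges ends before the other begins. There are two cases.
\begin{itemize}
\item If $e$ comes first, then $(a+1,b)\le(b,a)$ componentwise, so $a+1\le b$ and $b\le a$, which is a contradiction.
\item If $\sigma(e)$ comes first, then $(b,a+1)\le(a,b)$, so $b\le a$ and $a+1\le b$, which is again a contradiction.
\end{itemize}
This monotonicity argument is the only real obstacle. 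Once it is checked, the upper bound follows.

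For attainment, take $\mathcal F$ to be the family of all paths whose first step is $E$. Any two of them share the edge $\{(0,0),(1,0)\}$. The remaining $2n-1$ steps contain exactly $n-1$ further $E$'s, so $|\mathcal F|=\binom{2n-1}{n-1}$. This family is exactly one representative from each pair $\{F,F'\}$, which matches the structure of the bound.
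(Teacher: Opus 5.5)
Your proof is correct and uses the same reflection-pairing argument as the paper; the only cosmetic change is that your extremal family consists of paths starting with $E$ rather than $N$. The one substantive addition is that you actually justify $F\cap F'=\varnothing$, which the paper asserts without proof. Your componentwise-order argument is valid, though this also follows at once from the observation that a common edge of two paths from the origin must occur at the same step index, and at every index $F$ and $F'$ take opposite steps.
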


\begin{proof}
For each lattice path $F$ that begins with a $N$ step, consider the pair $\{F, F^c\}$ where $F^c$ is the path $F$ reflected along the line $x=y$. These pairs form a partition of the set of all lattice paths from $(0,0)$ to $(n,n)$. Since $F\cap F^c=\emptyset$, we can pick up at most one element from each pair

Consequently, $\mathcal F$ contains at most one path from each reflection pair, and hence
\[
    |\mathcal F|\le \frac12\binom{2n}{n}=\binom{2n-1}{n-1}.
\]
The family of all paths whose first step is $N$ has this size, and any two of its members share their first edge.
\end{proof}

We prove lattice-path analogues of Oddtown and Eventown. Our first main result is the following.

\begin{theorem}\label{thm:eventown}
Let $n$ be a positive integer, and let $\mathcal F$ be a family of lattice paths from $(0,0)$ to $(n,n)$ such that $|F\cap G|$ is even for every two distinct $F,G\in\mathcal F$. Then
\[
    |\mathcal F|\le 2^n,
\]
and the bound is attained.
\end{theorem}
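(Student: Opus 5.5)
The plan is linear-algebraic, refined by the geometry of the grid. First the lower bound, which is a construction. Write each path as a concatenation of $n$ blocks, each block being $NE$ or $EN$. Every block starts on a diagonal point $(k,k)$. Two such paths can share edges only inside blocks starting at the same point $(k,k)$. There they share both edges if the blocks agree and none otherwise. Hence $|F\cap G|$ is even, and we obtain $2^n$ paths.

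For the upper bound, identify each path $F$ with its indicator vector $x_F\in\mathbb F_2^{E}$, where $E$ is the edge set of the $(n+1)\times(n+1)$ grid. Then $|F\cap G|\equiv x_F\cdot x_G \pmod 2$. Since every path has $2n$ edges, $x_F\cdot x_F=0$ as well. So the span $W$ of $\{x_F:F\in\mathcal F\}$ is totally isotropic, and all $x_F$ lie in the affine space $x_{F_0}+\mathcal C$, where $\mathcal C$ is the cycle space, of dimension $n^2$, spanned by the unit faces. A naive count gives only $|\mathcal F|\le 2^{\dim W}$ with $\dim W$ potentially of order $n^2/2$: the Gram matrix of the faces is the adjacency matrix of the $n\times n$ grid mod $2$. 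So pure dimension counting in the ambient space cannot give $2^n$. The argument must use that the vectors are genuine paths, not arbitrary elements of $W$.

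The key step is therefore an injectivity statement. I would define a map $\phi:\mathcal F\to\mathbb F_2^n$ and prove that it is injective on any even-intersecting family. Guided by the extremal example, $\phi(F)_k$ should record the parity of the number of edges of $F$ lying in the $k$-th ``layer'' of edges between the anti-diagonals $x+y=2k-2$ and $x+y=2k$. Equivalently, it records the parity of $x_F\cdot y_k$ for a suitable fixed vector $y_k$ (a sum of edges in that strip, possibly weighted by the position of the diagonal). Two paths with $\phi(F)=\phi(G)$ but $F\neq G$ should then be shown to have odd intersection. I would look at the first anti-diagonal layer where they diverge. There they share the prefix, which contributes an even count only in the layers where both paths agree, and the layer where they split contributes an odd number of common edges. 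Making this precise is the step I expect to be the main obstacle: the paths can diverge and later rejoin, so common edges after the split must be controlled.

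If direct injectivity fails, my fallback is induction on $n$. Partition $\mathcal F$ according to the first two steps ($NN$, $NE$, $EN$, $EE$). Paths through $(1,1)$ with the same first two steps share exactly $2$ initial edges, so their remainders form an even-intersecting family from $(1,1)$ to $(n,n)$ and the induction hypothesis applies. Paths beginning $NE$ and $EN$ share no initial edge, so the two classes through $(1,1)$ together contribute at most $2\cdot 2^{n-1}$. The difficulty is the classes $NN$ and $EE$, which avoid $(1,1)$. For these I would try to show, via the isotropy of $W$ and a reflection argument as in the proof of Proposition~\ref{prop:nonempty}, that they can be traded against paths through $(1,1)$ without decreasing the family size. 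Either way, the crux is controlling intersections of paths after they separate.
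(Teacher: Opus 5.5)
Your lower-bound construction is correct; it is the special case $M=\{\{1,2\},\{3,4\},\dots,\{2n-1,2n\}\}$ of the paper's Catalan families. The upper bound, however, has a genuine gap, and your main plan cannot work.

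\textbf{The injective map cannot exist.} The coordinate you propose is identically zero. Every step raises $x+y$ by exactly $1$, so every path has exactly two edges between $x+y=2k-2$ and $x+y=2k$. More fundamentally, no map $\phi$ defined path by path, independently of $\mathcal F$, can be injective on every even-intersecting family. Such a $\phi$ must separate $F$ and $G$ whenever $|F\cap G|$ is even, since $\{F,G\}$ is itself an even-intersecting family. Hence every fibre of $\phi$ would be pairwise odd-intersecting, and so of size at most $n(n-1)+1$ by Theorem~\ref{thm:oddtown}. With only $2^n$ fibres this gives $\binom{2n}{n}\le 2^n\bigl(n(n-1)+1\bigr)$. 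That is false for $n=9$ ($48620>37376$) and for all larger $n$. So no signature whose collisions force odd intersection exists; the bound has to use the family as a whole.

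\textbf{The fallback stops at the hard case.} The classes $NE$ and $EN$ alone are bounded only by $2\cdot 2^{n-1}=2^n$. So any class-by-class estimate exceeds $2^n$ as soon as $NN$ or $EE$ is nonempty. You would need a joint inequality showing that paths avoiding $(1,1)$ are paid for by a deficit in the other two classes. Neither the induction hypothesis nor the reflection pairing of Proposition~\ref{prop:nonempty} supplies one; the hypothesis concerns square grids and says nothing about tails living in non-square rectangles. Such paths cannot simply be excluded either: $\{NNEE,NENE,ENEN,EENN\}$ is extremal for $n=2$ and has only one path in each of $NE$ and $EN$.

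\textbf{The missing idea.} The paper cuts each path at the middle antidiagonal $x+y=n$ rather than after two steps.
\begin{itemize}
\item Let $u(F)$ be the first half and $v(F)$ the reversed, reflected second half. The parity of $|F\cap G|$ is then $B_n(u(F),u(G))+B_n(v(F),v(G))$. So even intersection means $B_n(u(F),u(G))=B_n(v(F),v(G))$, with $\wt(u(F))=\wt(v(F))$.
\item Group the pairs $(u(F),v(F))$ into connected components of a bipartite graph and apply Cauchy--Schwarz. This reduces the problem to showing $\sum_j|A_j|^2\le 2^n$ for disjoint blocks $A_j\subseteq L(n)$ on which $B_n$ is blockwise constant (Lemma~\ref{hcl}).
\item Because $L(n)$ consists of paths with free endpoints, deleting the last step and splitting each block by its last letter gives two families satisfying the induction hypothesis. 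Their bounds $2^{n-1}$ add up to exactly $2^n$.
\end{itemize}
That free-endpoint reformulation, which makes the induction close, is what your argument lacks.
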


For the odd-intersection problem, define
\[
M_{\mathrm{odd}}(n):=
\max\bigl\{|\mathcal F|:\mathcal F\text{ is a family of paths from $(0,0)$ to $(n,n)$, and }|F\cap G|\text{ is odd for all }F\ne G\bigr\}.
\]

\begin{theorem}\label{thm:oddtown}
For every positive integer $n$,
\[
    M_{\mathrm{odd}}(n)\le n(n-1)+1.
\]
Moreover,
\[
    M_{\mathrm{odd}}(2k)\ge k^2
\]
for $k\ge1$, and
\[
    M_{\mathrm{odd}}(2k+1)\ge k(k+2)
\]
for $k\ge1$. In particular,
\[
    M_{\mathrm{odd}}(n)=\Theta(n^2).
\]
\end{theorem}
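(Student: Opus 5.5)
The upper bound is linear algebra over $\mathbb F_2$; the lower bounds are explicit constructions.

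\textbf{Encoding.} Let $\Gamma$ be the grid digraph on $\{0,\dots,n\}^2$. It has $2n(n+1)$ edges. To each path $F$ associate its edge-incidence vector $v_F\in\mathbb F_2^{E(\Gamma)}$. Then $v_F\cdot v_G\equiv |F\cap G|\pmod 2$, and $v_F\cdot v_F=2n\equiv 0$.

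\textbf{The rank inequality.} Let $\mathcal F=\{F_1,\dots,F_m\}$ be odd-intersecting. The Gram matrix of $v_{F_1},\dots,v_{F_m}$ is $J-I$ over $\mathbb F_2$. This matrix has rank $m$ if $m$ is even and rank $m-1$ if $m$ is odd, so $m\le \rank(J-I)+1$.

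Let $W$ be the span of all path vectors. The rank of $J-I$ is at most the rank of the dot-product form restricted to $W$, which is
\[
\dim W-\dim(W\cap W^{\perp}).
\]

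\textbf{Computing $\dim W$.} The span of $(0,0)\to(n,n)$ path vectors in this planar DAG is the cycle space of $\Gamma$ plus one path vector. Hence
\[
\dim W=|E|-|V|+2=n^2+1.
\]
This alone only gives $m\le n^2+2$, so I must exhibit a radical $W\cap W^\perp$ of dimension at least $n+1$. That would give $m\le n^2-n+1=n(n-1)+1$.

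\textbf{Finding the radical.} Let $L_k$ be the set of edges joining the antidiagonals $x+y=k$ and $x+y=k+1$. Every path meets each $L_k$ in exactly one edge, so $v_F\cdot (L_k+L_{k'})=0$ for every path $F$. Hence every $L_k+L_{k'}$ lies in $W^\perp$.

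The task is to decide which of these sums lie in $W$, which is the cycle space plus one path vector. Since $W$ is the kernel of the incidence map at interior vertices, this is a parity check at each vertex. I would also try to add vectors built from the boundary (corner) edges. The goal is to show these contribute $n+1$ independent radical elements.

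I expect this radical computation to be the main obstacle. If the $L_k$-type vectors fall short, my fallback is a sharper variant of the same bound. Instead of the radical, I would find a subspace $U\subseteq W$ of codimension at least $n-1$ that contains all differences $v_{F_i}-v_{F_1}$. These differences still have Gram matrix $J-I$, because
\[
(v_i+v_1)\cdot(v_j+v_1)=1 \quad (i\ne j), \qquad (v_i+v_1)\cdot(v_i+v_1)=0.
\]

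\textbf{Constructions.} For $M_{\mathrm{odd}}(2k)\ge k^2$ I will index paths by pairs $(a,b)\in[k]^2$. The idea is to split the walk into two halves, one in each half of the grid.
\begin{enumerate}
\item The first half is a staircase-type segment determined by $a$.
\item The second half is a segment determined by $b$.
\item The segments are chosen so that two first halves with $a\ne a'$ share an odd number of edges, while equal $a$ gives an even overlap $2k$-ish.
\item The same parity pattern holds for the second halves.
\end{enumerate}
After fixing a common forced edge so that the total parities combine to odd in every case, any two distinct indices intersect oddly.

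For $n=2k+1$, I would add one extra row and column and use pairs in $[k]\times[k+2]$ with the same template. Combined with the upper bound, this gives $M_{\mathrm{odd}}(n)=\Theta(n^2)$.
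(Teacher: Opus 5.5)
\textbf{There is a genuine gap in both parts.}

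\textbf{Upper bound.} Your linear-algebra framework is the paper's: Gram matrix $J+I$ over $\mathbb F_2$, $\dim W=n^2+1$, and the bound follows once the form on $W$ has rank $n(n-1)$, i.e.\ $\dim(W\cap W^\perp)=n+1$. That last fact is the whole content of the upper bound, and you leave it open. Worse, the vectors you propose cannot supply it.
\begin{itemize}
\item For $0\le k\le n-1$ the cut $L_k$ has odd degree exactly at $(0,k+1)$ and $(k+1,0)$. For $n\le k\le 2n-1$ it has odd degree exactly at $(k-n,n)$ and $(n,k-n)$.
\item These pairs are pairwise disjoint, except that $k=n-1$ and $k=n$ both give $\{(0,n),(n,0)\}$. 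None of them contains $(0,0)$ or $(n,n)$.
\item So the only nonzero combination of the $L_k$ lying in $W$ is $L_{n-1}+L_n$, the boundary of the antidiagonal band of cells. That is one radical vector, not $n+1$.
\end{itemize}
The missing idea is to write the form in the basis $Q=N^nE^n$, $C_{i,j}$. There its Gram matrix is $\begin{pmatrix}0&q^T\\ q&A\end{pmatrix}$, with $A$ the $\mathbb F_2$-adjacency matrix of the $n\times n$ grid of cells.
\begin{itemize}
\item Replacing $Q$ by $Q'=Q+\sum_{i<j}C_{i,j}$ removes $q$. This $Q'$ is the staircase $(NE)^n$, which is orthogonal to all of $W$. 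So the rank of the form is $\rank(A)$.
\item $\dim\ker A=n$. A kernel vector is determined by its first row via $r_{s+1}=Pr_s+r_{s-1}$, and the end condition $r_{n+1}=\chi_n(P)r_1=0$ holds automatically by Cayley--Hamilton.
\item Hence the radical is spanned by $(NE)^n$ and the cycles $\sum r_{i,j}C_{i,j}$ with $r\in\ker A$.
\end{itemize}
Your fallback does not bypass this. The differences $v_{F_i}+v_{F_1}$ are only known to lie in the cycle space, which has codimension $1$ in $W$. Even a subspace of codimension $n-1$ would give only $m\le n^2-n+4$ by dimension counting. What you need is the rank of the form, not a dimension.

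\textbf{Lower bounds.} You never actually construct the families, and the template you sketch cannot work. Suppose, as your description suggests, the overlap of $F_{a,b}$ and $F_{a',b'}$ splits as $c+f(a,a')+g(b,b')$: forced common edges plus independent contributions from the two halves.
\begin{itemize}
\item Your stated pattern gives $c+\text{even}+\text{odd}$ for $a=a'$, $b\ne b'$, but $c+\text{odd}+\text{odd}$ for $a\ne a'$, $b\ne b'$. No choice of $c$ makes both odd.
\item In fact no additive scheme of this kind exists for $k\ge2$. Comparing those two cases forces $f(a,a')\equiv f(a,a)$, and symmetrically $g(b,b')\equiv g(b,b)$. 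Then every overlap has the parity of $c+f(a,a)+g(b,b)=2n$, which is even.
\end{itemize}
The paper's family is deliberately not a product. It takes $N^{2i}E^{2j+1}NE^{2k-2j-1}N^{2k-2i-1}$ for $0\le i,j<k$.
\begin{itemize}
\item If $i<i'$, the two paths have disjoint middle parts. They share only $2i$ initial and $2k-2i'-1$ final vertical edges, regardless of $j,j'$.
\item If $i=i'$ and $j<j'$, they share $2i+(2j+1)+(2k-2j'-1)+(2k-2i-1)$ edges.
\end{itemize}
Both counts are odd. For $n=2k+1$ the paper uses $EFN$ for $F$ in this family, together with $2k$ further explicit paths $a_r,b_r$, rather than a $[k]\times[k+2]$ product.
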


We conclude with a Catalan family of extremal examples for Theorem~\ref{thm:eventown}.

\section{Proof of Theorem~\ref{thm:eventown}}\label{sec:even}

We identify a path in $L(m)$ with a binary word of length $m$ by replacing $N$ by $1$ and $E$ by $0$. For $x\in L(m)$, let $\wt(x)$ be its Hamming weight. For lattice paths $x,y\in L(m)$, define
\[
B_m(x,y):=|x\cap y|\pmod2\in\mathbb F_2.
\]

In terms of the binary words associated with lattice paths, $|x\cap y|\pmod2$ can be interpreted as follows:

\begin{equation}
    |x\cap y|\pmod2 = \# \{i:x_i=y_i \mbox{ and } \sum_{j=1}^{i-1}x_j=\sum_{j=1}^{i-1}y_j\} \pmod{2}
\end{equation}

Let $F=f_1\cdots f_{2n}$ be a path from $(0,0)$ to $(n,n)$. Define
\[
    u(F)=(f_1,\dots,f_n),
    \qquad
    v(F)=(1-f_{2n},\dots,1-f_{n+1}).
\]
Since $F$ contains exactly $n$ ones, we have
\[
    \wt(u(F))=\wt(v(F)).
\]

We state the following observation without proof.
\begin{observation}\label{obs:parity}
For any two paths $F,G$ from $(0,0)$ to $(n,n)$,
\begin{equation}\label{eq:split-parity}
B_{2n}(F,G)=B_n(u(F),u(G))+B_n(v(F),v(G))
\qquad\text{in }\mathbb F_2.
\end{equation}
\end{observation}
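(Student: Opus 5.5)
The plan is to reduce everything to a statement about individual steps, and then handle the two halves of the paths separately by a symmetry.

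\textbf{Step 1: common edges as a sum over step indices.} For a path $F=f_1\cdots f_m$ starting at $(0,0)$, let $v_i(F)$ be its $i$th vertex. The edge $\{v_{i-1}(F),v_i(F)\}$ joins a vertex with coordinate sum $i-1$ to one with coordinate sum $i$. So an edge determines the index $i$ of the step that traverses it, and also that step's letter and its starting vertex. Hence two paths share the $i$th edge of $F$ if and only if
\[
v_{i-1}(F)=v_{i-1}(G)\quad\text{and}\quad f_i=g_i ,
\]
and a common edge must be traversed at the same index $i$ by both paths. This gives
\[
|F\cap G|=\sum_{i=1}^{2n}\mathbf 1\bigl[v_{i-1}(F)=v_{i-1}(G),\ f_i=g_i\bigr],
\]
which is the displayed formula for $|x\cap y|\pmod 2$. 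Splitting the sum into $i\le n$ and $i>n$, it suffices to show two facts: the first part equals $|u(F)\cap u(G)|$, and the second part equals $|v(F)\cap v(G)|$. In fact both hold as integers, not just mod $2$.

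\textbf{Step 2: the first half.} This is immediate, since $u(F)$ is literally the prefix of $F$ of length $n$. Its vertices and steps for $i\le n$ are those of $F$, so the first part is $|u(F)\cap u(G)|$ by Step 1 applied in $L(n)$.

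\textbf{Step 3: the second half.} Both paths end at $(n,n)$, and I would read them backwards from there. Apply the map
\[
\phi(x,y)=(n-y,\,n-x),
\]
which is the rotation by $180^\circ$ about $(n/2,n/2)$ composed with the reflection in $x=y$. This is an isometry of $\mathbb Z^2$ with $\phi(n,n)=(0,0)$. It sends the vertex sequence $v_{2n}(F),v_{2n-1}(F),\dots,v_n(F)$ to a NE lattice path from $(0,0)$. Reversing the direction turns each step $s$ into $-s$, and the reflection then swaps $N$ and $E$, so the resulting steps are $-\phi$-images, namely $N\mapsto E$ and $E\mapsto N$. Thus the $j$th letter of this path is $1-f_{2n+1-j}$, so the path is exactly $v(F)$.

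Because $\phi$ is a bijection on vertices, and hence on edges, it maps the edge set $\{\text{edges of steps } n+1,\dots,2n \text{ of } F\}$ bijectively onto $E(v(F))$, and likewise for $G$. Therefore the number of common edges among steps $n+1,\dots,2n$ equals $|E(v(F))\cap E(v(G))|=|v(F)\cap v(G)|$.

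\textbf{Step 4: conclusion.} Adding the two parts gives
\[
|F\cap G|=|u(F)\cap u(G)|+|v(F)\cap v(G)|,
\]
and reducing mod $2$ yields \eqref{eq:split-parity}.

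The only delicate point is Step 3. One has to check that the complement in the definition of $v$ is exactly what the reflection produces, so that the reversed suffix is a genuine NE path starting at $(0,0)$. Even if the convention were off, any fixed edge-bijection applied to both paths preserves the intersection count, so the identity would still go through.
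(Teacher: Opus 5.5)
Your proof is correct and complete. The paper states this observation without proof, and your route (split the per-step count of common edges at $i=n$, read the prefix as $u(F)$, and read the reversed suffix as $v(F)$ via the reflection $(x,y)\mapsto(n-y,n-x)$ in the line $x+y=n$) is the justification implicit in the paper's displayed formula for $|x\cap y| \pmod 2$; you in fact obtain the stronger integer identity $|F\cap G|=|u(F)\cap u(G)|+|v(F)\cap v(G)|$, which makes the mod-$2$ statement immediate.
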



If $B_{2n}(F,G)$ is even, then $B_n(u(F),u(G))=B_n(v(F),v(G))$. Therefore, we have to prove the following.

\begin{lemma}\label{bip}
    Let $R\subseteq L(n)\times L(n)$ with $\wt(x)=\wt(y)$ for all $(x,y)\in R$ and $B_n(x,x')=B_n(y,y')$ for all $(x,y),(x',y')\in R$. Then $|R|\le 2^n$. 
\end{lemma}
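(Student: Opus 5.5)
The plan is to linearize the problem over $\mathbb F_2$ and reduce it to bounding the dimension of a totally isotropic subspace. Let $\mathcal E_n$ be the set of lattice edges that occur in some path of $L(n)$. To each $x\in L(n)$ associate its edge-indicator vector $\chi_x\in\mathbb F_2^{\mathcal E_n}$. Then
\[
B_n(x,x')=\chi_x\cdot\chi_{x'}
\]
is the restriction of the standard dot product. Let $V\subseteq\mathbb F_2^{\mathcal E_n}$ be the span of all $\chi_x$, and equip $V\oplus V$ with the symmetric form
\[
\langle (a,b),(a',b')\rangle=a\cdot a'+b\cdot b'.
\]
To each $(x,y)\in R$ associate $w_{x,y}=(\chi_x,\chi_y)$. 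The hypothesis $B_n(x,x')=B_n(y,y')$ says exactly that $\langle w_{x,y},w_{x',y'}\rangle=0$ for all pairs in $R$, including the diagonal case, where both sides equal $n \bmod 2$. The map $(x,y)\mapsto w_{x,y}$ is injective, since a path is determined by its edge set. Hence $|R|\le|W|=2^{\dim W}$, where $W=\operatorname{span}\{w_{x,y}\}$ is totally isotropic. So it suffices to show $\dim W\le n$.

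The next step is to understand $(V,\cdot)$ explicitly. I would choose a convenient basis of $V$. A natural choice is the $2^n$ path vectors grouped by endpoint, or better the ``difference'' vectors $\chi_x+\chi_{x'}$ for $x,x'$ differing in one step. I would compute $\dim V$ and the radical $V\cap V^\perp$ of the dot product restricted to $V$. The weight condition $\wt(x)=\wt(y)$ enters here. Decompose $V$ according to the endpoint $(n-k,k)$, or equivalently look at the last edge of each path. Then $w_{x,y}$ lies in the subspace where both coordinates have the same ``endpoint profile''. This should force $W$ to sit isotropically inside a space of the form $U\oplus U$ in which the form $\cdot\oplus\cdot$ has a controlled radical.

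For a nondegenerate space $U$, a totally isotropic subspace of $U\oplus U$ under $\cdot\oplus\cdot$ has dimension at most $\dim U$. For example, the graph of an isometry attains this. In general the bound is $\dim W\le \dim U+\tfrac12\dim\operatorname{rad}(U)$, up to correcting terms. So the goal is to show that the relevant $U$, after quotienting out the appropriate part of the radical, has $\dim U\le n$. A heuristic check supports this: the $n$ ``first-difference'' vectors indexed by the $n$ steps, together with the endpoint data, should generate everything modulo the radical.

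I expect the main obstacle to be this dimension count. Naively $V$ has dimension of order $n^2$, comparable to the number of edges $|\mathcal E_n|$, so the bound $2^n$ cannot come from $\dim V$ alone. It must come from a large radical, or from the weight constraint cutting things down. My first attempt would be to compute the Gram matrix of $B_n$ on $L(n)$ recursively in $n$, splitting off the first step, in the spirit of Observation~\ref{obs:parity}. The aim is to show that its rank over $\mathbb F_2$, restricted to each weight class, is small enough that the isotropic bound gives $\dim W\le n$.

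If the linear-algebra bound turns out to be lossy, the fallback is induction on $n$. Split $R$ according to the first steps $(x_1,y_1)$, which lie in $\{0,1\}^2$. Express $B_n$ in terms of $B_{n-1}$ on the suffixes plus a correction term, which is $1$ when $x_1=y_1$. Then show that the four resulting subfamilies can be paired so that their sizes total at most $2\cdot 2^{n-1}$.
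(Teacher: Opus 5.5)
Your main reduction aims at an inequality that is false, so the linear-algebra route cannot work as set up. Take the diagonal relation $R=\{(x,x):x\in L(n)\}$. It satisfies both hypotheses and has $|R|=2^n$, but its span is $W=\{(v,v):v\in V\}$, so $\dim W=\dim V$.

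\begin{itemize}
\item Already for $n=1$ the vectors $\chi_N,\chi_E$ are independent, giving $\dim W=2>1$.
\item In general, each path $N^kE^{n-k}$ ($0\le k\le n$) has an edge used by no other path of this form, so $\dim V\ge n+1$. In fact $\dim V=\binom{n+1}{2}+1$.
\end{itemize}

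So no finer analysis of radicals or endpoint profiles can give $\dim W\le n$. The step $|R|\le|W|$ is inherently lossy, because the vectors $w_{x,y}$ form a sparse, rigidly structured subset of their span rather than all of it. Neither rescue you mention helps. The diagonal example satisfies the weight condition. A large radical works against you: the largest totally isotropic subspace of $V\oplus V$ under $\cdot\oplus\cdot$ has dimension exactly $\dim V+\dim\operatorname{rad}V$, not $\dim V+\tfrac12\dim\operatorname{rad}V$.

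The fallback induction does not close the gap, and the recursion it relies on is only partly valid.
\begin{itemize}
\item When you split on the first step, $B_n(x,x')=1+B_{n-1}(\text{suffixes})$ holds only when the two paths being compared have the same first step. The relevant condition is $x_1=x'_1$ (resp.\ $y_1=y'_1$), not $x_1=y_1$; the hypothesis never compares $x$ with $y$.
\item When the first steps differ, the suffixes start at $(0,1)$ and $(1,0)$ and can still meet. For example, $NEN$ and $ENN$ share their last edge, although the suffixes $EN$ and $NN$ are disjoint as paths from the origin.
\item Every constraint linking two different subfamilies $(x_1,y_1)\ne(x'_1,y'_1)$ involves such a pair. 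Those cross-constraints are exactly what must bring the total down to $2\cdot 2^{n-1}$, and they are not expressible through $B_{n-1}$. Your claim that the subfamilies ``can be paired'' is just the lemma restated.
\end{itemize}

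The paper supplies the two missing ideas.
\begin{enumerate}
\item It treats $R$ as a bipartite graph with components $A_j\times B_j$. Connectivity gives $B_n\equiv n$ on each $A_j$, and $B_n$ is constant on $A_i\times A_j$ for $i\ne j$. Cauchy--Schwarz, $|R|\le\sum_j|A_j||B_j|\le(\sum_j|A_j|^2)^{1/2}(\sum_j|B_j|^2)^{1/2}$, then reduces everything to $\sum_j|A_j|^2\le 2^n$ (Lemma~\ref{hcl}), and likewise for the $B_j$.
\item It inducts by splitting on the \emph{last} step, where $B_n(ua,vb)=B_{n-1}(u,v)+\mathbf 1[a=b]\,\mathbf 1[\wt u=\wt v]$ holds exactly. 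This produces two admissible families $\mathcal G_0,\mathcal G_1$ for $n-1$, and $(s+t)^2\le 2s^2+2t^2$ combines their bounds into $2^n$.
\end{enumerate}
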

\begin{proof}[Proof of Lemma \ref{bip}]
    Form a bipartite graph $G$ where the vertex sets are $L(n)$ on both parts. The edge set is given by the subset $R$. Let $A_j, B_j$ be the left and right vertex sets of the $j$th connected component of $G$. So, we have \begin{eqnarray}
        |R|\le \sum_j |A_j||B_j|\le (\sum_j |A_j|^2)^{1/2}(\sum_j |B_j|^2)^{1/2}
    \end{eqnarray}
    
    Let $x,x'$, on the same part, have a common neighbour $y$ in $G$. Then we have $B_n(x,x')=B_n(y,y)=n\pmod{2}$. So, for any two $x,x'$ from the same part with a path between them, we have $B_n(x,x')=n\pmod{2}$. Therefore, for the $j$th connected component of $G$, we have $B_n(x,x')=B_n(y,y')=n\pmod{2}$ for $x,x'\in A_j$ and $y,y'\in B_j$.

    Similarly, for two pairs $(x,y),(x',y)\in R\cap (A_{j_1}\times B_{j_1}), (a,b)\in R\cap (A_{j_2}\times B_{j_2})$ where $j_1\ne j_2$, we have 
    $$B_n(x,a)=B_n(y,b)=B_n(x',a).$$ By connectivity in the $j$th component, we have $$B_n(x,a)=B_n(x',a)$$ for any $x,x'\in A_{j_1}$ and $a\in A_{j_2}$.
    
    Therefore, to complete the proof, we need to prove the following lemma. 

\begin{lemma}\label{hcl}
    Let $A_1,\dots, A_r$ be pairwise disjoint subsets of $L(n)$ such that:
    \begin{enumerate}
        \item every $A_i$ belongs to one Hamming-weight level;
        \item $B_n(x,y)=n\pmod{2}$ for all $x,y\in A_i$;
        \item for $i\ne j$, $B_n$ is constant on $A_i\times A_j$.
    \end{enumerate}
    Then $\sum_j |A_j|^2\le 2^n$.
\end{lemma}
\begin{proof}[Proof of Lemma \ref{hcl}]
    We induct on $n$ and the case of $n=0$ is immediate.
    We can partition $A_i=B_i^0\cup B_i^1$ where $B_i^0$ are the words/lattice paths ending in $0$ and $B_i^1$ are the words/lattice paths ending in $1$.
    From $A_1,\dots,A_r$, we produce two families of subsets that satisfy the induction hypothesis for $n-1$. 
    
    Let $$\mathcal{G}_0=\{B_i^0\}_{i=1}^{r} \cup \{B_{i}^1: B_{i}^0\ne \emptyset\},\quad \mathcal{G}_1=\{B_i^1\}_{i=1}^{r} \cup \{B_{i}^0: B_{i}^1\ne \emptyset\}.$$
    We claim that $\mathcal{G}_0$ satisfies the induction hypothesis. The other one follows by symmetry.
    Let $u,v\in B_i^0$, then $B_n(u0,v0)=B_{n-1}(u,v)+B_1(1,1)=B_{n-1}(u,v)+1\equiv n\pmod2$. So, $B_{n-1}(u,v)\equiv n-1 \pmod2$.
    Now, if $u\in B_i^0,v\in B_j^0$, then $B_{n}(u0,v0)=B_{n-1}(u,v)$ is constant. Now, if $u\in B_i^0, v\in B_{j}^1$ ($i$ may be equal to $j$), then $B_{n}(u0,v1)=B_{n-1}(u,v)$ is constant. We only have to prove that $B_i^0\cap B_{i+1}^1 =\emptyset$ to show that these are all disjoint subsets. Suppose $u\in B_i^0 \cap B_{i+1}^1$, this implies that $u0\in A_i, u1\in A_{i+1}$. Also, there exists an $x\in B_{i+1}^0$ such that $x0\in A_{i+1}$. We know that $B_n(u0,x0)=B_{n-1}(u,x)=B_{n}(u1,x0)=n\pmod 2$ and $B_n(u0,u1)=n-1\pmod2$. But $B_na$ is constant on $A_{i}\times A_j$ and this forces a contradiction. This proves our claim.

    Let $I$ be the set indices such that both $B_i^0\ne \emptyset$ and $B_i^1\ne \emptyset$. Then, we can write, by the induction hypothesis,
    \begin{eqnarray}
        \sum_{i\notin I} |B_i^0|^2 + \sum_{i \in I} (|B_i^0|^2+|B_i^1|^2)&\le& 2^{n-1},\label{fam1}\\
        \sum _{i\notin I} |B_{i}^1|^2+\sum_{i \in I} (|B_i^0|^2+|B_i^1|^2)&\le& 2^{n-1}.\label{fam2}
    \end{eqnarray}
    We want to evaluate 
    \begin{equation}
        \sum_{i\notin I} (|B_i^0|+|B_i^1|)^2 + \sum_{i\in I} (|B_i^0|+|B_i^1|)^2.
    \end{equation}
    We have that if $i\notin I$, then $|B_i^0|+|B_i^1| = |B_i^0| \mbox{ or } |B_i^1|$ and for $i\in I$, we have $(|B_i^0|+|B_i^1|)^2 \le 2|B_i^0|^2 + 2|B_{i}^1|^2$.
    Using these two inequalities along with Equations \eqref{fam1} and \eqref{fam2}, we get 
    \begin{eqnarray*}
        \sum_{i\notin I} (|B_i^0|+|B_i^1|)^2 + \sum_{i\in I} (|B_i^0|+|B_i^1|)^2 &\le& \sum_{i\notin I} (|B_i^0|^2+|B_i^1|^2) + \sum_{i\in I} 2(|B_i^0|^2+|B_i^1|^2)\\ &\le& 2^{n-1}+2^{n-1}\\&\le& 2^n.
    \end{eqnarray*}
\end{proof}
This completes the proof.
\end{proof}
The tightness of the bound is discussed in Section \ref{sec:catalan}.


\section{Proof of Theorem~\ref{thm:oddtown}}\label{sec:odd}

\subsection{The upper bound}

Let $\Gamma_n$ be the $(n+1)\times(n+1)$ grid graph with vertex set $\{0,1,\dots,n\}^2$, and work in the edge space $\mathbb F_2^{E(\Gamma_n)}$ with the standard bilinear form
\[
    \langle x,y\rangle
    :=\sum_{e\in E(\Gamma_n)}x_e y_e.
\]
We identify each path with its edge-incidence vector. Thus
\[
    \langle F,G\rangle=|F\cap G|\pmod2.
\]

Let $W$ be the span of all lattice paths from $(0,0)$ to $(n,n)$. For $1\le i,j\le n$, let $C_{i,j}$ be the four-edge cycle bounding the cell whose lower-left vertex is $(i-1,j-1)$. Let $Q=N^nE^n$, the path that follows the left boundary and then the top boundary of the grid.

The vectors
\[
    Q,\quad C_{i,j}\quad(1\le i,j\le n)
\]
form a basis of $W$. Indeed, the cell cycles form a basis of the cycle space of $\Gamma_n$. The symmetric difference of any two $(0,0)$--$(n,n)$ paths is a cycle-space vector, so every path lies in the span of $Q$ and the cell cycles. Conversely, each $C_{i,j}$ is the symmetric difference of two North-East paths that agree everywhere except on the two routes around that cell; hence every cell cycle lies in $W$. Finally, $Q$ is not in the cycle space because its only odd-degree vertices are $(0,0)$ and $(n,n)$, so this spanning set is linearly independent.

Let $G_W$ be the Gram matrix of the bilinear form on $W$ in this basis. Then
\begin{equation}\label{eq:GW-block}
G_W=
\begin{pmatrix}
0&q^T\\
q&A
\end{pmatrix},
\end{equation}
where $A$ is the adjacency matrix over $\mathbb F_2$ of the $n\times n$ grid graph on the cells. Indeed, two distinct cell cycles have scalar product $1$ exactly when their cells share an edge, and every cell cycle has scalar product $0$ with itself.

The vector $q$ records the parity of the common edges of $Q$ and the cell cycles. Explicitly,
\[
q_{i,j}=1
\quad\Longleftrightarrow\quad
\bigl(i=1,\ j<n\bigr)
\text{ or }
\bigl(j=n,\ i>1\bigr).
\]
Let $u\in\mathbb F_2^{n^2}$ be defined by
\[
    u_{i,j}=\mathbf 1[i<j].
\]
A direct check of the four neighbours of each cell shows that
\begin{equation}\label{eq:Auq}
    Au=q.
\end{equation}
Replacing $Q$ by
\[
    Q':=Q+\sum_{i<j}C_{i,j}
\]
is a basis change. By \eqref{eq:Auq}, the vector $Q'$ is orthogonal to every cell cycle. The bilinear form is alternating on $W$, because each basis vector has even edge cardinality, so $\langle Q',Q'\rangle=0$. Therefore $G_W$ is congruent to
\begin{equation}\label{eq:GW-congruent}
    \begin{pmatrix}
    0&0\\
    0&A
    \end{pmatrix},
\end{equation}
and hence
\begin{equation}\label{eq:rankGW}
    \rank(G_W)=\rank(A).
\end{equation}

We now compute the rank of $A$.

\begin{lemma}\label{lem:grid-rank}
Over $\mathbb F_2$, the adjacency matrix of the $n\times n$ grid graph has rank $n(n-1)$.
\end{lemma}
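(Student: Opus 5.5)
The grid adjacency matrix is a Kronecker sum, $A = P\otimes I + I\otimes P$, where $P$ is the $n\times n$ adjacency matrix of the path $P_n$. Its rank is therefore $n^2-\dim\ker A$. The plan is to show $\dim\ker A = n$ directly, via a ``transfer'' description of the kernel, which avoids eigenvalue arguments in characteristic $2$.

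\textbf{Step 1 (rows determine everything).} Write a vector $x\in\mathbb F_2^{n^2}$ as rows $r_1,\dots,r_n\in\mathbb F_2^n$, where row $j$ consists of the cells $(\cdot,j)$. The condition $Ax=0$ at the cells of row $j$ reads
\[
r_{j-1}+P r_j+r_{j+1}=0 \qquad (1\le j\le n),
\]
with the conventions $r_0=r_{n+1}=0$. Hence $r_{j+1}=P r_j+r_{j-1}$, so every row is determined by $r_1$: one gets $r_j=U_{j-1}(P)\,r_1$, where the $U_k$ are the Chebyshev-type polynomials over $\mathbb F_2$ defined by
\[
U_0=1,\qquad U_1=t,\qquad U_{k+1}=tU_k+U_{k-1}.
\]
The only remaining constraint is the top boundary condition $r_{n+1}=0$, that is, $U_n(P)\,r_1=0$. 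So $\ker A\cong\ker U_n(P)$.

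\textbf{Step 2 (the key identity).} Next, observe that $U_n(t)$ is exactly the characteristic polynomial of $P$ over $\mathbb F_2$. Expanding $\det(tI+P_k)$ along the last row gives the same recurrence and the same initial values, and signs are irrelevant in characteristic $2$. By Cayley–Hamilton, $U_n(P)=0$. Therefore every $r_1\in\mathbb F_2^n$ extends to a kernel vector, so $\dim\ker A=n$ and
\[
\rank A=n^2-n=n(n-1).
\]

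\textbf{Step 3 (checks).} It remains to confirm the indexing. The recurrence for rows $j=1,\dots,n$ uses $r_0=0$ at the start, and the equation at row $n$ is precisely what forces $r_{n+1}=0$. It is worth sanity-checking small cases: for $n=1$, $A=0$ has rank $0$; for $n=2$, $A$ is the adjacency matrix of a $4$-cycle, whose rank over $\mathbb F_2$ is $2$, as predicted.

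The main obstacle is Step 2. One must be careful that the polynomial $U_n$ produced by the row recurrence matches the characteristic polynomial of $P_n$ exactly, including the starting indices $U_0=1$ and $U_1=t$, which correspond to $\det$ of the empty matrix and $\det(tI+P_1)$. Once that match is established, Cayley–Hamilton finishes the argument.
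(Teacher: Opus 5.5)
Your proposal is correct and follows the paper's proof essentially verbatim: the row recurrence $r_{j+1}=Pr_j+r_{j-1}$ (with $r_0=r_{n+1}=0$) shows that a kernel vector is determined by $r_1$, and identifying the resulting polynomial ($U_n$ in your indexing, $f_{n+1}$ in the paper's) with the characteristic polynomial of $P$ lets Cayley--Hamilton give $\dim\ker A=n$. The only differences are cosmetic: your index shift in the Chebyshev-type polynomials, and your Kronecker-sum remark, which the argument never uses.
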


\begin{proof}
Let $P$ be the adjacency matrix of the path graph on $n$ vertices. Write a vector in $\mathbb F_2^{n^2}$ as $n$ rows $r_1,\dots,r_n\in\mathbb F_2^n$, and set $r_0=r_{n+1}=0$. The equation $Ar=0$ is equivalent to
\begin{equation}\label{eq:row-recurrence}
    r_{s+1}=Pr_s+r_{s-1}
    \qquad(1\le s\le n).
\end{equation}
Thus a kernel vector is uniquely determined by its first row $r_1$.

Define polynomials over $\mathbb F_2$ by
\[
    f_0(x)=0,
    \qquad
    f_1(x)=1,
    \qquad
    f_{s+1}(x)=x f_s(x)+f_{s-1}(x).
\]
Recurrence \eqref{eq:row-recurrence} gives
\[
    r_s=f_s(P)r_1.
\]
The characteristic polynomial $\chi_n(x)$ of $P$ satisfies
\[
    \chi_0(x)=1,
    \qquad
    \chi_1(x)=x,
    \qquad
    \chi_s(x)=x\chi_{s-1}(x)+\chi_{s-2}(x),
\]
where the plus sign appears because we are in characteristic $2$. Hence
\[
    f_{n+1}(x)=\chi_n(x).
\]
By the Cayley--Hamilton theorem,
\[
    r_{n+1}=f_{n+1}(P)r_1=\chi_n(P)r_1=0
\]
for every choice of $r_1$. Therefore every $r_1\in\mathbb F_2^n$ determines a kernel vector, and it does so uniquely. Hence $\dim\ker A=n$, and
\[
    \rank(A)=n^2-n=n(n-1).
\]
\end{proof}

Now let $\mathcal F=\{F_1,\dots,F_m\}$ be pairwise odd-intersecting, and let
\[
    H=(\langle F_i,F_j\rangle)_{1\le i,j\le m}
\]
be its Gram matrix. Since every path has $2n$ edges,
\[
    H=J_m+I_m
    \qquad\text{over }\mathbb F_2,
\]
where $J_m$ is the all-ones matrix. The matrix $J_m+I_m$ has rank $m$ when $m$ is even and rank $m-1$ when $m$ is odd; in particular,
\[
    m-1\le\rank(H).
\]
Because $H$ is the Gram matrix of vectors in $W$,
\[
    \rank(H)\le\rank(G_W).
\]
Using \eqref{eq:rankGW} and Lemma~\ref{lem:grid-rank}, we obtain
\[
    m-1\le n(n-1),
\]
so
\[
    |\mathcal F|=m\le n(n-1)+1.
\]
\subsection{Lower bounds}
Now, we show the lower bound constructions. We do not include the proof that these families are pairwise odd intersecting as it is easy to check.

\textbf{When $n=2k$:} 
    
    Consider the family \begin{equation}
        \mathcal{F}_{2k} = \{1^{2i}0^{2j+1}10^{2k-2j-1}1^{2k-2i-1}: 0\le i,j<n\}
    \end{equation} This family has size $k^2$ and satisfies the odd intersection rule.

    \textbf{When $n=2k+1$:}
    
    We can add $2k$ more paths to the previous family.
    Consider $\mathcal{F}'_{2k}=\{0 1^{2i}0^{2j+1}10^{2k-2j-1}1^{2k-2i}:0\le i,j<k\}$ obtained from $\mathcal{F}_{2k}$. 
    Now, we add the following paths: For $0\le r<k$, let $a_r:=1^{2k+2-2r}01^{2r-2}0^{2k}1$ and $b:=01^{2k}0^{2k+1-2r}10^{2r-1}$.
    The final family \begin{equation}
        \mathcal{F}_{2k+1}=\mathcal{F}'_{2k} \cup \{a_r,b_r:0\le r<k\}
    \end{equation}
    has size $k^2+2k$ and satisfies the odd intersection rule.
\section{Further remarks}\label{sec:catalan}

Let $C_n$ denote the $n$th Catalan number. Among its many interpretations, $C_n$ counts the noncrossing perfect matchings of $[2n]$ \cite{ec2-stan}.

Let $M$ be a noncrossing perfect matching of $[2n]$. Define
\begin{equation}\label{eq:FM}
\mathcal F_M
:=
\left\{
    w\in\{E,N\}^{2n}:
    \{w_i,w_j\}=\{E,N\}
    \text{ for every }\{i,j\}\in M
\right\}.
\end{equation}
Each matching edge can be oriented independently as $EN$ or $NE$. Therefore every word in $\mathcal F_M$ contains exactly $n$ occurrences of each letter, and
\[
    |\mathcal F_M|=2^n.
\]

We claim that $\mathcal F_M$ is pairwise even-intersecting. Let $w,w'\in\mathcal F_M$, and define
\[
    \Delta_t
    :=
    \sum_{s=1}^{t}
    \bigl(\mathbf 1[w_s=N]-\mathbf 1[w'_s=N]\bigr),
    \qquad
    \Delta_0=0.
\]
At position $t$, the two words use the same lattice edge exactly when
\[
    w_t=w'_t
    \qquad\text{and}\qquad
    \Delta_{t-1}=0.
\]

Call a matching edge $\{a,b\}\in M$, with $a<b$, \emph{unchanged} if $w$ and $w'$ give it the same orientation. If it is changed, then $w$ and $w'$ use different steps at both $a$ and $b$, so neither position contributes a common edge. Suppose that $\{a,b\}$ is unchanged. Since $M$ is noncrossing, every matching edge with one endpoint strictly between $a$ and $b$ has its other endpoint strictly between $a$ and $b$ as well. Each such matching edge contributes a total of zero to the change in $\Delta$, because each word contains one $N$ and one $E$ on that edge. Consequently,
\[
    \Delta_{b-1}=\Delta_{a-1}.
\]
Thus the positions $a$ and $b$ either both contribute common edges or neither does. Common edges therefore occur in pairs indexed by the unchanged matching edges, and $|w\cap w'|$ is even.

It remains to show that different matchings give different families. For distinct $i,j\in[2n]$,
\begin{equation}\label{eq:recover-M}
\{i,j\}\in M
\quad\Longleftrightarrow\quad
w_i\ne w_j\text{ for every }w\in\mathcal F_M.
\end{equation}
The forward implication follows from the definition. If $i$ and $j$ are not matched to each other, they lie on two different matching edges, whose orientations can be chosen independently so that $w_i=w_j$. Hence \eqref{eq:recover-M} holds, and $M$ can be recovered from $\mathcal F_M$.

There are therefore at least $C_n$ distinct families of size $2^n$ satisfying the even-intersection condition. By Theorem~\ref{thm:eventown}, all of them are extremal.

\begin{conjecture}\label{conj:catalan}
There are exactly $C_n$ families of size $2^n$ in which every two distinct paths have an even number of common edges.
\end{conjecture}

The conjecture has been verified for $n\le5$. If true in general, it would give another combinatorial interpretation of the Catalan numbers.

This notion can be explored further for modular intersection results and  generalisations to paths from $(0,0)$ to $(m,n)$.

\end{document}